\definecolor{sonnengelb}{rgb}{0.9,0.5,0} 
\definecolor{brown}{rgb}{0.5,1,0}
\newtheorem{Problem}{Problem}[section]
\begin{document}
\mainmatter              
\title{A Note on the Characterization of Digraph Sequences \thanks{This work
    was supported by the DFG Focus Program Algoritheorem Engineering,
    grant MU 1482/4-2.}}
\titlerunning{Digraph Sequences}
\author{Annabell Berger}
\institute{Department of Computer Science\\
  Martin-Luther-Universit\"at Halle-Wittenberg\\
\email{berger@informatik.uni-halle.de}}
\authorrunning{A. Berger}
\maketitle

\begin{abstract}
We consider the following fundamental realization problem of directed graphs.
Given a sequence $S:={a_1 \choose b_1},\dots,{a_n \choose b_n}$ with $a_i,b_i\in \mathbb{Z}_0^+.$ Does there exist a digraph (no loops and no parallel arcs are allowed)
$G=(V,A)$ with a labeled vertex set $V:=\{v_1,\dots,v_n\}$ such that for all $v_i \in V$ indegree and outdegree of $v_i$ match exactly the given numbers $a_i$ and $b_i$,
respectively? There exist two known approaches solving this problem in polynomial running time. One first approach of Kleitman and Wang (1973) uses recursive algorithms to construct digraph realizations \cite{KleitWang:73}. The second one draws back into the Fifties and Sixties of the last century and gives a complete characterization of digraph sequences (Gale 1957, Fulkerson 1960, Ryser 1957, Chen 1966). That is, one has only to validate a certain number of inequalities. Chen bounded this number by $n$. His characterization demands the property that $S$ has to be in lexicographical order. We show that this condition is stronger than necessary. We provide a new characterization which is formally analogous to the classical one by Erd{\H o}s and Gallai (1960) for graphs. Hence, we can give several, different sets of $n$ inequalities. We think that this stronger result can be very important with respect to structural insights about the sets of digraph sequences, for example in the context of threshold sequences. Furthermore, the number of inequalities can be restricted to all $k \in \{1,\dots,n-1\}$ with $a_{k+1}>a_{k}$ and to $k=n.$ An analogous result for graphs was given by Tripathi and Vijay \cite{TripathiVijay03}. We prove this property also for the case of digraphs (no parallel arcs) with at most one loop per vertex.
\end{abstract}

\section{Characterization of Digraph Sequences}

\begin{Problem}[digraph realization problem]\label{problem:digraph-realization}
Given is a finite sequence $S:={a_1 \choose b_1},\dots,{a_n \choose b_n}$
with $a_i,b_i\in \mathbb{Z}_0^+.$ Does there exist a digraph (without parallel arcs and loops)
$G=(V,A)$ with a labeled vertex set $V:=\{v_1,\dots,v_n\}$ such that
we have indegree $d^-_{G}(v_i)=a_i$ and outdegree $d^+_{G}(v_i)=b_i$ for all $v_i\in V$?
\end{Problem}

If the answer is ``yes'', we call sequence $S$ \emph{digraph sequence} and  digraph $G$ a \emph{digraph realization}. We exclude tuples ${0 \choose 0}$ in $S.$ Furthermore, we will tacitly assume that $\sum_{i=1}^{n}a_i=\sum_{i=1}^{n}b_i$, as this is obviously a necessary condition for any realization to exist, since the number of ingoing arcs must equal the number of outgoing arcs. We say that sequence $S$ is in \emph{decreasing lexicographical order} if we have ${a_i \choose b_i}\geq_{lex}{a_{i+1} \choose b_{i+1}}$ for all $i \in \{1,\dots,n-1\}$, i.e.\ $(a_i>a_{i+1}) \lor (a_i=a_{i+1} \land b_i \geq b_{i+1}).$ There exist two known approaches solving this problem in polynomial running time. One first approach of Kleitman and Wang uses recursive algoritheorems to construct digraph realizations \cite{KleitWang:73}. The second one gives a complete characterization of digraph sequences. Hence, it is possible to check a polynomial number of inequalities (in the size of the number of tuples in a sequence) which leads to the correct decision of the realizability of a sequence. Note that the digraph realization problem allows opposite directed arcs (i.e., $(x,y),(y,x) \in A$). As far as we know the analogous problem for oriented digraphs is open. However, these views come from another analogous problem --- the \emph{graph realization problem} which asks whether a given \emph{undirected sequence} $S:=(d_1),\dots,(d_n)$ with $d_i\in \mathbb{Z}_0^+$ possesses a realization as a graph. The characterization approach was found by Erd{\H o}s and Gallai \cite{ErGa:60} and a realization algorithm was introduced by Havel \cite{Havel:56} and Hakimi \cite{Hakimi:62}. Four authors, namely David Gale \cite{Ga:57}, Herbert J. Ryser \cite{Ry:57}, Delbert Ray Fulkerson \cite{Fulk:60} and Wai-Kai Chen \cite{Chen:66} gave sufficient and necessary conditions which completely characterize digraph sequences. Actually, none of the mentioned authors has found the following theorem in its general form. Gale and Ryser dealt with digraph sequences where at most one loop per vertex but bo parallel arc is allowed. We call their problem \emph{digraph realization problem with at most one loop per vertex}. Note, that their version of the realization problem also characterizes vertex degree sequences of bipartite graphs. The reason is that the adjacency matrix of a digraph with at most one loop per vertex corresponds to the adajcency matrix of a bipartite graph. The row sets and column sets can be considered as the two independent vertex sets. However, their insights are also fundamental for characterizing digraph sequences without loops. Fulkerson gave a general result with exponentially many inequalities and Chen formulated and proved the final details. Therefore, we cite the following theorem as the result of all four authors.

\begin{theorem}[Characterization of digraph sequences \cite{Fulk:60,Ry:57,Chen:66,Ga:57}] \label{Theorem: Charakterisierung von Digraph-Sequenzen}
Let $S:={a_1 \choose b_1},\dots,{a_n \choose b_n}$ be a sequence in decreasing lexicographical order. $S$ is a digraph sequence if and only if the following conditions are fulfilled for each $k \in \{1,\dots,n\}$:
\begin{gather}\sum_{i=1}^{k}\min{(b_i,k-1)}+\sum_{i=k+1}^{n}\min{(b_i,k)}\geq \sum_{i=1}^{k}a_i.\tag{*}\end{gather}
\end{theorem}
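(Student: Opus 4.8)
The plan is to route the problem through its zero--one matrix formulation and then read off the inequalities~(*) as the tight cut constraints of a transportation network, the lexicographic hypothesis entering only at the very last step to collapse exponentially many subset conditions down to the $n$ conditions in the statement. First I would observe that a digraph realization of $S$ is exactly an $n\times n$ zero--one matrix $M=(m_{ij})$ with vanishing diagonal $m_{ii}=0$ (no loops, no parallel arcs), prescribed row sums $\sum_j m_{ij}=b_i$ and prescribed column sums $\sum_i m_{ij}=a_j$. I would encode the existence of such an $M$ as a feasible integral flow in the bipartite network with a source $s$, arcs $s\to u_i$ of capacity $b_i$, a sink $t$, arcs $w_j\to t$ of capacity $a_j$, and unit-capacity arcs $u_i\to w_j$ for every pair $i\neq j$. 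Since all capacities are integers, an integral maximum flow exists, and a realization is present precisely when this flow has value $N:=\sum_{i=1}^{n}a_i=\sum_{i=1}^{n}b_i$, i.e.\ when it saturates all source and sink arcs.

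\emph{Necessity} needs no flow machinery. Fixing $k$, the right-hand side $\sum_{i=1}^{k}a_i$ counts the arcs of any realization whose head lies in $\{v_1,\dots,v_k\}$. Counting the same arcs by their tails, a tail $v_i$ with $i\le k$ can send at most $\min(b_i,k-1)$ such arcs (it has $b_i$ out-arcs but cannot target itself), while a tail $v_i$ with $i>k$ can send at most $\min(b_i,k)$; summing over all tails gives the left-hand side, so~(*) holds.

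\emph{Sufficiency} I would obtain from max-flow min-cut: the saturating flow exists iff every $s$--$t$ cut has capacity at least $N$. I would parametrise a cut by the set $K$ of in-nodes $w_j$ placed on the sink side, and minimise over the placement of each out-node $u_i$ separately, noting that $u_i$ costs $b_i$ on the sink side versus $|K\setminus\{i\}|$ on the source side. Holding $K$ fixed, the resulting cut value is $\sum_{i}\min\bigl(b_i,\,|K|-[i\in K]\bigr)+N-\sum_{i\in K}a_i$, so feasibility is equivalent to Fulkerson's subset condition
\begin{gather}
\sum_{i\in K}\min(b_i,k-1)+\sum_{i\notin K}\min(b_i,k)\ \ge\ \sum_{i\in K}a_i
\tag{$\star\star$}
\end{gather}
for \emph{every} $K\subseteq\{1,\dots,n\}$ with $k:=|K|$.

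\emph{The crux} is to show that decreasing lexicographic order makes the prefix $K=\{1,\dots,k\}$ the worst choice of each fixed size, so that the $n$ prefix instances of~($\star\star$)---which are exactly~(*)---imply all the others. Writing the slack of~($\star\star$) for fixed $k$ as a constant minus $\sum_{i\in K}h_i$ with $h_i:=a_i+[b_i\ge k]$, minimising the slack amounts to maximising $\sum_{i\in K}h_i$ over $|K|=k$. The key lemma I would prove is the monotonicity $h_1\ge h_2\ge\cdots\ge h_n$: when $a_i>a_{i+1}$ integrality gives $a_i\ge a_{i+1}+1$, which absorbs the indicator gap, and when $a_i=a_{i+1}$ the lexicographic tie-break forces $b_i\ge b_{i+1}$, hence $[b_i\ge k]\ge[b_{i+1}\ge k]$. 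Given this monotonicity the prefix maximises $\sum_{i\in K}h_i$, so its validity forces~($\star\star$) for all $K$ of that size. The main obstacle is precisely this interaction between the ordering of the $a_i$ and the threshold term $[b_i\ge k]$: making the exchange argument airtight (equivalently, establishing the monotonicity of $h_i$) is where the lexicographic hypothesis is genuinely needed and where the passage from Fulkerson's exponential family to Chen's $n$ inequalities is realised.
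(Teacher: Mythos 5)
Your proof is correct, but it takes a route the paper itself never walks, because the paper does not actually prove this theorem: necessity is sketched in the remarks following the statement by exactly the double-counting argument you give, and sufficiency is cited from Chen \cite{Chen:66} (with Gale, Ryser and Fulkerson credited for the groundwork); the paper's own machinery (Propositions \ref{ColumnSumInsightVorbereitung} and \ref{ColumnSumInsight} and the adjacent-transposition exchange) is reserved for its main theorem, which weakens the lexicographic hypothesis. Your argument is a self-contained reconstruction of the classical route: the zero-trace $0$--$1$ matrix and the network with arcs $s\to u_i$ of capacity $b_i$, unit arcs $u_i\to w_j$ for $i\neq j$, and arcs $w_j\to t$ of capacity $a_j$ is the Gale--Fulkerson setting; your cut value $\sum_i \min\bigl(b_i,|K|-[i\in K]\bigr)+N-\sum_{i\in K}a_i$ is right and yields Fulkerson's exponential family of subset inequalities; and your reduction of that family to the $n$ prefix inequalities (*) via $\min(b_i,k-1)=\min(b_i,k)-[b_i\ge k]$ and the monotonicity of $h_i:=a_i+[b_i\ge k]$ is a clean repackaging of Chen's step --- both the integrality argument when $a_i>a_{i+1}$ and the tie-break argument when $a_i=a_{i+1}$ are airtight. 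What your approach buys is a complete, verifiable proof that isolates exactly where lexicographic order enters: only to obtain $[b_i\ge k]\ge[b_{i+1}\ge k]$ at ties $a_i=a_{i+1}$. What the paper buys with its different, exchange-based argument is precisely the removal of that tie-breaking hypothesis: its main theorem shows that decreasing $a_i$ alone suffices, with inequalities needed only at indices $k$ where $a_k>a_{k+1}$ and at $k=n$ --- a strengthening your monotonicity lemma cannot deliver as stated, since at a tie with $b_i<b_{i+1}$ the sequence $h_i$ fails to be monotone and the prefix need not be the worst set of its size.
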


The necessity is easy to see. Consider an arbitrary digraph $G$ with labeled vertex set $\{v_1,\dots,v_n\}.$ We count all ingoing arcs of the first $k$ vertices. This value corresponds to $\sum_{i=1}^{k}a_i.$ Either these ingoing arcs are (a) outgoing arcs between the first $k$ arcs or (b) directed arcs from the remaining vertices $v_{k+1},\dots,v_n$ to the first $k$ arcs. The number of arcs in case (a) is at most $\sum_{i=1}^{k}\min(b_i,k-1),$ because the number of arcs between one vertex to all other vertices in this subset is at most $k-1,$ unless the outdegree is smaller than $k-1.$ In this case the number of arcs between one vertex to all other vertices is at most its outdegree. Using a similar argument the number of arcs in case (b) is at most $\sum_{i=k+1}^{n}\min(b_i,k.).$ This leads to the conditions in (*). Note, that it is not necessary to sort the vertices in decreasing lexicographical order to get necessity of the theorem. Hence, the necessity is fulfilled for each ordering of a sequence $S.$ Chen proved the sufficiency of this theorem for a decreasing lexicographical sorted sequence. One could ask if it is also fulfilled for each sorting of a sequence. Unfortunately, this is not true. Consider the following counter example. Sequence $S:={3 \choose 3},{1 \choose 2},{3 \choose 3},{3 \choose 2}$ is not a digraph sequence, which can easy be seen sorting it in a decreasing lexicographical order and validating the conditions in Theorem \ref{Theorem: Charakterisierung von Digraph-Sequenzen}. Assume it is sufficient to use the given ordering of $S.$ Then all conditions of the above theorem are fulfilled. In contrast to the property that necessity is fulfilled for each sorting of a sequence, the set of sortings for suffiency is smaller. It turns out that sufficiency is fulfilled for all sequences with decreasingly sorted components $a_i.$ Furthermore, the number of inequalities can be restricted to all $k \in \{1,\dots,n-1\}$ with $a_{k+1}>a_{k}$ and to $k=n.$ An analogous result for graphs was given by Tripathi and Vijay \cite{TripathiVijay03}. Their proof was simplified by Dahl and Flatberg \cite{DahlFlatberg05} using a simple geometric argument. Note, that semi-regular sequences for digraphs $G=(V,A)$ (one component is regular) are only needed to be checked in (*) for $k=n.$ Since this inequality is always fulfilled for any semi-regular sequence with $b_i \leq n-1,$  such sequences are always realizable, when they fulfill these obvious constraints. In the version of Ryser and Gale (directed graphs with at most one loop per vertex, bipartite graphs), the left-hand-side of (*) is the sum $\min(b_i,k)$ over all vertices. For each sorting of a sequence with decreasingly sorted $a_i $ these sums  are constant for fixed $k.$  Hence, the reformulated result as given in this paper holds trivially for the case of digraph realizations with loops. To the best of our knowledge, there does not exist a result restricting the number of inequalities to all $k$ with $a_{k+1}<a_{k}$ analogous to the insight of Tripathi and Vijay. Since, it is very simple to verify this property we add its proof to fill this gap in Theorem \ref{Theorem:strictMajorization}. In contrast to this version, the left-hand-side of (*) in Theorem \ref{Theorem: Charakterisierung von Digraph-Sequenzen} can be different for several sortings of sequences with decreasingly sorted $a_i$ and fixed $k.$ In this case, Chen's idea says that it is sufficient to order the $b_i$ from the biggest to smallest, because the left sum is then minimum. We relax the conditions of this theorem in the sense showing that sequence $S$ has only to be sorted in decreasing order with respect to its first components $a_i.$ We think that this observation could be very important in the context of \emph{threshold sequences} (see the book by Mahadev and Peled\cite{MaPe:95}). A threshold sequence can be defined as the unique sequence fulfilling all inequalities in Theorem \ref{Theorem: Charakterisierung von Digraph-Sequenzen} with equality. Hence, our result leads to several and not only one threshold sequence for a given sequence $S$ (dependent on the number of decreasing permutations of $S$). The corresponding adjacency matrix of the digraph realization of a threshold sequence is also known as \emph{Ferrers matrix}. This classical relationship has been pointed out in \cite{Be:11}. Ferrers `diagrams' were first given by Sylvester \cite{Syl:1882} in 1882.  We need a simple and well-known combinatorial insight using the principle of double counting in a Ferrers matrix. Counting the entries `one' of the first $k$ columns there is an equivalent formulation of this sum with respect to the entries in their rows. We get for an arbitrary, finite set $M \subset \mathbb{N}$, $k \in \mathbb{N}:$  
\begin{gather}\sum_{i \in M} \min{(i,k)}=\sum_{j=1}^{k}|\{i \in M|~i\geq j\}|. \end{gather}

We prove a slightly stronger version of the theorem of Ryser and Gale \cite{Ry:57,Ga:57}.

\begin{theorem}\label{Theorem:strictMajorization}
Sequence $S:={a_1 \choose b_1},\dots,{a_n \choose b_n}$ with $a_1 \geq \dots \geq a_n$ is a digraph sequence with at most one loop per vertex if and only if we find for all $k \in \{1,\dots,n-1\}$ with $a_k >a_{k+1}$ and for $k=n$ that
\begin{gather}\sum_{i=1}^{n}\min{(b_i,k)}\geq \sum_{i=1}^{k}a_i.\tag{**}\end{gather}
\end{theorem}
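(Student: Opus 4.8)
The plan is to reduce the claim to the classical theorem of Ryser and Gale \cite{Ry:57,Ga:57} and then to discard the superfluous indices by a short concavity argument in the spirit of Tripathi and Vijay. Necessity is immediate: if $S$ has a realization, then the counting argument proving necessity in Theorem~\ref{Theorem: Charakterisierung von Digraph-Sequenzen} applies verbatim, except that a vertex may now send an arc to itself, so the cap for arcs from each vertex into the first $k$ vertices is $k$ rather than $k-1$; this yields that (**) holds for \emph{every} $k\in\{1,\dots,n\}$, in particular at the prescribed indices. For sufficiency it therefore suffices to prove that (**) at the prescribed indices already forces (**) for all $k\in\{1,\dots,n\}$. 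Once this is known, observe that the left-hand side $\sum_{i=1}^{n}\min(b_i,k)$ depends only on the multiset $\{b_1,\dots,b_n\}$, while the right-hand side $\sum_{i=1}^{k}a_i$ equals the sum of the $k$ largest first components because $a_1\geq\dots\geq a_n$; hence both sides are unchanged if $S$ is reordered into decreasing lexicographical order, so the classical theorem applies and produces a realization.

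Accordingly I set $f(k):=\sum_{i=1}^{n}\min(b_i,k)-\sum_{i=1}^{k}a_i$ for $k\in\{0,\dots,n\}$, with $f(0)=0$. The heart of the proof is the forward difference. Since $\min(b_i,k+1)-\min(b_i,k)=1$ exactly when $b_i>k$ and is $0$ otherwise, I obtain
\[
f(k+1)-f(k)=\bigl|\{i:b_i>k\}\bigr|-a_{k+1}.
\]
Writing $g(k):=\bigl|\{i:b_i>k\}\bigr|$, the function $g$ is non-increasing in $k$, so the increments $g(k)-a_{k+1}$ are controlled by $a$ only through the single term $a_{k+1}$.

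Next I decompose $\{1,\dots,n\}$ into its maximal blocks of equal first components, say $a_j=a_{j+1}=\dots=a_m$ with $a_{j-1}>a_j$ (or $j=1$) and $a_m>a_{m+1}$ (or $m=n$). For $k=j-1,\dots,m-1$ the value $a_{k+1}$ is the constant block value, so the increments $f(k+1)-f(k)=g(k)-a_{k+1}$ are non-increasing in $k$; that is, $f$ is concave on $\{j-1,\dots,m\}$. A concave sequence attains its minimum over an interval at one of the two endpoints, hence $\min_{j-1\le k\le m}f(k)=\min\bigl(f(j-1),f(m)\bigr)$. Each endpoint is either $0$ (where $f(0)=0$), or $n$, or an index where $a$ strictly decreases ($a_{j-1}>a_j$ for the left endpoint, $a_m>a_{m+1}$ for the right endpoint) --- precisely the indices at which (**), i.e.\ $f\geq 0$, is assumed. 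Thus $f\geq 0$ at both endpoints and therefore on the whole block, and since the blocks cover $\{1,\dots,n\}$ we conclude $f(k)\geq 0$ for all $k$.

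The only step I expect to require care is the forward-difference computation together with the resulting concavity: one must verify that the increment splits as the non-increasing counting function $g(k)$ minus the locally constant term $a_{k+1}$, since it is exactly the constancy of $a$ across a block that makes $f$ concave there and forces its minimum onto the break points we are permitted to test. Everything else is bookkeeping and the invocation of \cite{Ry:57,Ga:57}.
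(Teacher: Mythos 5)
Your proof is correct and takes essentially the same approach as the paper: both reduce to the all-$k$ version of the theorem of Ryser and Gale and then exploit the fact that the increments $X(k+1)-X(k)=|\{i:\,b_i\geq k+1\}|$ of the left-hand side are non-increasing in $k$ while the subtracted terms $a_{k+1}$ are constant on each block of equal $a$-values, so that (**) at the prescribed indices (the block boundaries) forces (**) at every $k$. If anything, your direct ``concave on each block, hence minimal at an endpoint'' formulation is tidier than the paper's argument by contradiction, which pushes the deficiency from the first bad index past the end of its block to the next strict descent $k'$ and there slips on the arithmetic (it should be $A_{k'}=(k'-k-1)a_k+A_k+a_{k'}$ rather than $(k'-k)a_k+A_k+a_{k'}$); stopping at the last index of the block, as your endpoint argument implicitly does, avoids that issue altogether.
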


\begin{proof}
The theorem was proven for all $k \in \{1,\dots,n\}$ by Ryser and Gale \cite{Ry:57,Ga:57}. We define for each sequence $S$ the left-hand-side sum by
$X(k):=\sum_{i=1}^{n}\min{(b_i,k)}$ for $k\in \{1,\dots,n-1\}.$ Furthermore, we set $A_k:=\sum_{i=1}^{k}a_i$ and define $X(0):=0.$
Using (1) it is easy to see that  $X(k+1)-X(k)=|\{b_i|~b_i\geq k+1\}|\leq |\{b_i|~b_i\geq k\}|=X(k)-X(k-1)$ for all $k \in \{0,\dots,n\}.$ 
We assume for all indices $i$ with $a_{i+1}>a_{i}$ that $X(i)\geq A_i.$ Let $k$ be the first index such that $a_k=a_{k+1}$ and $X(k)<A_k.$ Let $k' >k$ the first index with $a_{k'}<a_k.$ If there does not exist such a $k',$ we set $k':=n.$ With our assumption we have $X(k')\geq A_{k'}.$ Since $X(k-1) \geq A_{k-1}$, we get $X(k)-X(k-1)<a_k$ if $k>1.$ For $k=1$ we have $X(1)-X(0)<a_k.$ It follows for each $i$ with $k<i<k',$ 
$$X(i)-X(i-1)\leq X(k)-X(k-1)<a_k.$$
Furthermore, we have 
\begin{eqnarray*}
X(k')-X(k)&=&(X(k')-X(k'-1))+(X(k'-1)-X(k'-2))+\dots\\
& & + (X(k+2)-X(k+1))+(X(k+1)-X(k))\\
&<&(k'-k)a_k.
\end{eqnarray*}
We get $X(k')=X(k')-X(k)+X(k)<(k'-k)a_k+A_k+a_{k'}=A_{k'}.$\hfill$\Box$
\end{proof}

To prove our main result for digraph sequences, we need some further insights. We denote an ordering of a given sequence $S$ by $S_{\sigma}:={a_{\sigma(1)} \choose b_{\sigma(1)}}\dots,{a_{\sigma(n)} \choose b_{\sigma (n)}},$ where $\sigma: \mathbb{N}_n \mapsto \mathbb{N}_n$ is a permutation. For a simpler notion, we denote the left-hand-side of (*) in Theorem \ref{Theorem: Charakterisierung von Digraph-Sequenzen} for an arbitrary ordering $S_{\sigma}$ of sequence $S$ by $$X_{\sigma}(k):=\sum_{i=1}^{k}\min{(b_{\sigma (i)},k-1)}+\sum_{i=k+1}^{n}\min{(b_{\sigma (i)},k)}$$ for $k \in \{1,\dots,n\}.$ Furthermore, we define $X_{\sigma}(0):=0$ for all permutations $\sigma.$ 

\begin{proposition}\label{ColumnSumInsightVorbereitung}
For each ordering $S_{\sigma}$ of $S$ and $k \in \{0,1,\dots,n-1\}$ we have
$$X_{\sigma}(k+1)-X_{\sigma}(k)=|\{b_{\sigma (i)}|~i\leq k, b_{\sigma (i)} \geq k\}|+|\{b_{\sigma (i)}|~i\geq k+2,b_{\sigma (i)} \geq k+1\}|.$$
\end{proposition}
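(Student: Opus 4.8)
The plan is to compute the difference $X_{\sigma}(k+1)-X_{\sigma}(k)$ directly from the definition and to show that the two sums telescope into the claimed counting expression. The main technical subtlety is that the index ranges of the two sums defining $X_{\sigma}(k)$ shift when $k$ is incremented: the boundary between the ``$k-1$'' block and the ``$k$'' block moves, so the term for index $i=k+1$ is counted with $\min(b_{\sigma(k+1)},k)$ in $X_{\sigma}(k+1)$ but with $\min(b_{\sigma(k+1)},k)$ in $X_{\sigma}(k)$ as well. I would therefore partition the index set $\{1,\dots,n\}$ according to how each term appears in the two quantities, so that every index is handled in exactly one case.

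Let me write

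\[
X_{\sigma}(k)=\sum_{i=1}^{k}\min(b_{\sigma(i)},k-1)+\sum_{i=k+1}^{n}\min(b_{\sigma(i)},k),
\]
\[
X_{\sigma}(k+1)=\sum_{i=1}^{k+1}\min(b_{\sigma(i)},k)+\sum_{i=k+2}^{n}\min(b_{\sigma(i)},k+1).
\]
The plan is to split the indices into three groups. First, for $i\le k$ the term changes from $\min(b_{\sigma(i)},k-1)$ to $\min(b_{\sigma(i)},k)$; this difference equals $1$ exactly when $b_{\sigma(i)}\ge k$ and $0$ otherwise, which contributes the first counting term $|\{b_{\sigma(i)}\mid i\le k,\ b_{\sigma(i)}\ge k\}|$. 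Second, for $i=k+1$ the term changes from $\min(b_{\sigma(k+1)},k)$ to $\min(b_{\sigma(k+1)},k)$, so it contributes nothing. Third, for $i\ge k+2$ the term changes from $\min(b_{\sigma(i)},k)$ to $\min(b_{\sigma(i)},k+1)$, whose difference is $1$ exactly when $b_{\sigma(i)}\ge k+1$, giving the second counting term $|\{b_{\sigma(i)}\mid i\ge k+2,\ b_{\sigma(i)}\ge k+1\}|$.

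I expect the only genuine obstacle to be the careful bookkeeping at the moving boundary index $i=k+1$, together with the boundary cases $k=0$ and the endpoints of the ranges; these must be checked so that no index is double counted or dropped. The core identity I rely on is the elementary fact that $\min(m,k)-\min(m,k-1)$ equals the indicator $[\,m\ge k\,]$, which I would state once and apply uniformly across the three groups. Summing the three contributions yields exactly the right-hand side of the proposition, completing the argument.
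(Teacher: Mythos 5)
Your proposal is correct and follows essentially the same route as the paper: both split the index set into the block $i\le k$, the single index $i=k+1$ (whose term $\min(b_{\sigma(k+1)},k)$ appears identically in $X_{\sigma}(k)$ and $X_{\sigma}(k+1)$ and thus cancels), and the block $i\ge k+2$, then convert the two block differences into the stated cardinalities. The only cosmetic difference is that you justify each block via the pointwise identity $\min(m,k)-\min(m,k-1)=[\,m\ge k\,]$, whereas the paper invokes its double-counting identity (1) for the whole sums; these are trivially equivalent, and your remark that $k=0$ needs a separate (one-line) check matches what the paper does.
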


\begin{proof}
For $k \geq 1$ we get
\small
\begin{eqnarray*}
X_{\sigma}(k+1)-X_{\sigma}(k)&=&\sum_{i=1}^{k+1}\min{(b_{\sigma (i)},k)}+\sum_{i=k+2}^{n}\min{(b_{\sigma (i)},k+1)}\\
&& -\left(\sum_{i=1}^{k}\min{(b_{\sigma (i)},k-1)}+\sum_{i=k+1}^{n}\min{(b_{\sigma (i)},k)}\right)\\
&=&\sum_{i=1}^{k}\min{(b_{\sigma (i)},k)}+\min{(b_{\sigma(k+1)},k)}+\sum_{i=k+2}^{n}\min{(b_{\sigma (i)},k+1)}\\
& & - \left(\sum_{i=1}^{k}\min{(b_{\sigma (i)},k-1)}+\min{(b_{\sigma (k+1)},k)}+\sum_{i=k+2}^{n}\min{(b_{\sigma (i)},k)}\right)\\
&=&\sum_{i=1}^{k}\min{(b_{\sigma (i)},k)}-\sum_{i=1}^{k}\min{(b_{\sigma (i)},k-1)}\\
& &+\sum_{i=k+2}^{n}\min{(b_{\sigma (i)},k+1)}-\sum_{i=k+2}^{n}\min{(b_{\sigma (i)},k)}\\
&\overset{(1)}{=}&|\{b_{\sigma (i)}|~i\leq k, b_{\sigma (i)} \geq k\}|
+|\{b_{\sigma (i)}|~i \geq k+2,b_{\sigma (i)} \geq k+1\}|.\\
\end{eqnarray*}
\normalsize
For $k=0,$ we have 
$X_{\sigma}(1)-X_{\sigma}(0)=\sum_{i=2}^{n}\min{(b_{\sigma (i)},1)}\overset{(1)}{=}|\{b_{\sigma (i)}|~i \geq 2,b_{\sigma (i)} \geq 1\}|.$\hfill$\Box$
\end{proof}

\begin{proposition}\label{ColumnSumInsight}
For each ordering $S_{\sigma}$ of sequence $S$ with $k',k \in \{1,\dots, n\}$ and $k'>k$  we have 
$$X_{\sigma}(k')-X_{\sigma}(k'-1)\leq X_{\sigma}(k)-X_{\sigma}(k-1)+1.$$
Moreover, if we find $X_{\sigma}(k)-X_{\sigma}(k-1)= X_{\sigma}(k-1)-X_{\sigma}(k-2)+1$ for one $k \geq 2 ,$ then we get for all $k'>k$
$$X_{\sigma}(k')-X_{\sigma}(k'-1)\leq X_{\sigma}(k)-X_{\sigma}(k-1).$$
\end{proposition}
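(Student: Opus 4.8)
The plan is to read both inequalities through the first differences $d(k):=X_{\sigma}(k+1)-X_{\sigma}(k)$, for which Proposition~\ref{ColumnSumInsightVorbereitung} already supplies an explicit combinatorial formula. Substituting $m:=k'-1$ and $\ell:=k-1$, the first assertion is precisely $d(m)\le d(\ell)+1$ for all $m>\ell\ge 0$, while the hypothesis of the ``moreover'' part reads $d(k-1)=d(k-2)+1$. I would first note that the second assertion is an immediate consequence of the first: applying $d(m)\le d(\ell)+1$ with $\ell:=k-2$ yields $d(m)\le d(k-2)+1=d(k-1)$ for every $m>k-2$, in particular for every $m>k-1$, which is exactly the claimed bound $X_{\sigma}(k')-X_{\sigma}(k'-1)\le X_{\sigma}(k)-X_{\sigma}(k-1)$. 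Thus the whole proposition reduces to the single inequality $d(m)\le d(\ell)+1$ for $m>\ell$, and no separate argument for the ``moreover'' part is needed.

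To prove that inequality I would use Proposition~\ref{ColumnSumInsightVorbereitung} in the form $d(k)=|P_k|+|Q_k|$, where $P_k:=\{i\le k : b_{\sigma(i)}\ge k\}$ and $Q_k:=\{i\ge k+2 : b_{\sigma(i)}\ge k+1\}$ are the two index sets occurring there (the boundary case $\ell=0$ fits in with $P_0=\emptyset$). For $m>\ell$ two monotonicity facts are immediate from the definitions. First, $Q_m\subseteq Q_\ell$, since $i\ge m+2$ forces $i\ge \ell+2$ and $b_{\sigma(i)}\ge m+1$ forces $b_{\sigma(i)}\ge \ell+1$. Second, splitting $P_m$ at position $\ell$, its ``early'' part $\{i\le \ell : b_{\sigma(i)}\ge m\}$ lies inside $P_\ell$, because a value $\ge m$ is a fortiori $\ge \ell$. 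Combining these, $d(\ell)=|P_\ell|+|Q_\ell|\ge |\{i\le\ell:b_{\sigma(i)}\ge m\}|+|Q_m|+|Q_\ell\setminus Q_m|$, and everything comes down to the remaining ``middle-band'' part $P_m^{\mathrm{mid}}:=\{\,\ell< i\le m : b_{\sigma(i)}\ge m\,\}$ of $P_m$.

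The crux, and the only place where the additive $1$ is spent, is the claim $|P_m^{\mathrm{mid}}|\le |Q_\ell\setminus Q_m|+1$. I would establish it by placing each middle-band index, with at most one exception, inside $Q_\ell\setminus Q_m$: for $i\in P_m^{\mathrm{mid}}$ with $i\ge \ell+2$ one has $b_{\sigma(i)}\ge m\ge \ell+1$ and $i\ge \ell+2$, so $i\in Q_\ell$, while $i\le m<m+2$ gives $i\notin Q_m$, hence $i\in Q_\ell\setminus Q_m$. The only member of $P_m^{\mathrm{mid}}$ escaping this inclusion is the lone index $i=\ell+1$, which is exactly the position excluded from $Q$ by the diagonal gap at $k+1$ in Proposition~\ref{ColumnSumInsightVorbereitung}; this single index is what forces the $+1$. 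Feeding $|P_m^{\mathrm{mid}}|\le |Q_\ell\setminus Q_m|+1$ into the previous estimate gives $d(\ell)\ge |\{i\le\ell:b_{\sigma(i)}\ge m\}|+|Q_m|+|P_m^{\mathrm{mid}}|-1=|P_m|+|Q_m|-1=d(m)-1$, i.e.\ $d(m)\le d(\ell)+1$. I expect the main obstacle to be precisely the bookkeeping that isolates the exceptional index $\ell+1$ and certifies that every other middle-band index lands in the slack $Q_\ell\setminus Q_m$; once the set inclusions are arranged correctly, the chaining of the cardinality inequalities is routine.
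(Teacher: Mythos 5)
Your proof is correct. Writing $d(k):=X_{\sigma}(k+1)-X_{\sigma}(k)$ as you do, your treatment of the first inequality is essentially the paper's own argument recast in set language: the paper likewise invokes Proposition~\ref{ColumnSumInsightVorbereitung}, splits the set $\{i\le k'-1:\ b_{\sigma(i)}\ge k'-1\}$ at positions $k-1$ and $k$, absorbs the middle band $\{k+1\le i\le k'-1:\ b_{\sigma(i)}\ge k'-1\}$ together with the tail into $\{i\ge k+1:\ b_{\sigma(i)}\ge k\}$, and identifies the lone position $k$ (your $i=\ell+1$) as the only source of the additive $1$; the paper also carries an extra negative term $-|\{b_{\sigma(k')}:\ b_{\sigma(k')}\ge k\}|$ which it discards in its final step, so your dropping it loses nothing. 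The genuine difference is in the ``moreover'' part. The paper proves it by an equality-case analysis: the hypothesis forces every inequality in its chain to be tight, from which it reads off $b_{\sigma(k)}<k-1$ and $b_{\sigma(k-1)}\ge k-1$, and then the chain is rerun for each $k'>k$ with the $+1$ term now vanishing. You instead obtain it as an immediate corollary of the first inequality: applying $d(m)\le d(\ell)+1$ with $\ell:=k-2$ (legitimate since $k\ge2$ keeps the indices in range) gives $d(k'-1)\le d(k-2)+1=d(k-1)$ for all $k'>k$, which is exactly the claim. This shortcut is both simpler and more robust---it avoids the somewhat delicate assertion that all inequalities in a chain are simultaneously tight and the structural consequences the paper extracts from it---and it shows that the second statement needs no separate combinatorial argument at all.
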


\begin{proof}
\begin{eqnarray*}
X_{\sigma}(k')-X_{\sigma}(k'-1)&\overset{\textnormal{Prop. \ref{ColumnSumInsightVorbereitung}}}{=}&|\{b_{\sigma (i)}|~i\leq k'-1, b_{\sigma (i)} \geq k'-1\}|+|\{b_{\sigma (i)}|~i \geq k'+1,b_{\sigma (i)} \geq k'\}|\\
&= & |\{b_{\sigma (i)}|~i \leq k-1, b_{\sigma (i)} \geq k'-1\}|+|\{b_{\sigma (k)}|~b_k\geq k'-1\}|\\
& &+|\{b_{\sigma (i)}|~k+1\leq i \leq k'-1,b_{\sigma (i)} \geq k'-1\}|+
|\{b_{\sigma (i)}|~i \geq k'+1,b_{\sigma (i)} \geq k'\}|\\
& \leq & |\{b_{\sigma (i)}|~i \leq k-1, b_{\sigma (i)} \geq k-1\}|+|\{b_{\sigma (k)}|~b_k\geq k'-1\}|\\
&& +|\{b_{\sigma (i)}|~i \geq k+1,b_{\sigma (i)} \geq k\}|-|\{b_{\sigma (k')}|~b_{\sigma (k')}\geq k\}|\\
&\overset{\textnormal{Prop. \ref{ColumnSumInsightVorbereitung}}}{\leq}& X_{\sigma}(k)-X_{\sigma}(k-1)+|\{b_{\sigma (k)}|~b_k\geq k'-1\}|-|\{b_{\sigma (k')}|~b_{\sigma (k')}\geq k\}|\\
&\leq & X_{\sigma}(k)-X_{\sigma}(k-1)+1
\end{eqnarray*}
Assume $X_{\sigma}(k)-X_{\sigma}(k-1)= X_{\sigma}(k-1)-X_{\sigma}(k-2)+1,$ for one $k \geq 2.$ Clearly, all inequalities in the previous chain of inequalities must be equalities. Hence, we get in line $4$, $b_{\sigma(k)} < k-1$ and $b_{\sigma(k-1)}\geq k-1.$  For each $k'>k$ it follows that $|\{b_{\sigma (k)}|~b_{\sigma(k)}\geq k'-1\}|=0.$\hfill$\Box$ 
\end{proof}

\noindent We prove the main result of this note.

\begin{theorem}[Digraph Characterization]
Sequence $S:={a_1 \choose b_1},\dots,{a_n \choose b_n}$ with \\$a_1 \geq \dots \geq a_n$ is a digraph sequence if and only if we find for all $k \in \{1,\dots,n-1\}$ with $a_k >a_{k+1}$ and for $k=n$ that
\begin{gather}\sum_{i=1}^{k}\min{(b_i,k-1)}+\sum_{i=k+1}^{n}\min{(b_i,k)}\geq \sum_{i=1}^{k}a_i.\tag{**}\end{gather}
\end{theorem}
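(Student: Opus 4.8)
The plan is to prove the theorem by establishing two directions. Necessity is already contained in the discussion following Theorem~\ref{Theorem: Charakterisierung von Digraph-Sequenzen}: the counting argument there shows that $X_{\mathrm{id}}(k) \geq A_k$ holds for \emph{every} ordering of the sequence, so in particular the inequalities (**) are necessary for the identity ordering with $a_1 \geq \dots \geq a_n$. The real content is the sufficiency direction, and here the structure should mirror the proof of Theorem~\ref{Theorem:strictMajorization} very closely, with Propositions~\ref{ColumnSumInsightVorbereitung} and~\ref{ColumnSumInsight} playing the role that equation~(1) played in the loop case.

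For sufficiency I would argue by contradiction. Suppose (**) holds for all $k$ with $a_k > a_{k+1}$ and for $k=n$, but that $X_{\mathrm{id}}(k) < A_k$ for some $k$; by the hypothesis such a $k$ must satisfy $a_k = a_{k+1}$ and $k < n$. Take the smallest such violating index $k$, so that $X_{\mathrm{id}}(k-1) \geq A_{k-1}$, and let $k'$ be the first index $>k$ with $a_{k'} < a_k$ (setting $k' := n$ if none exists); in either case the hypothesis gives $X_{\mathrm{id}}(k') \geq A_{k'}$. From $X_{\mathrm{id}}(k-1) \geq A_{k-1}$ together with $X_{\mathrm{id}}(k) < A_k$ we obtain the gap estimate
$$X_{\mathrm{id}}(k) - X_{\mathrm{id}}(k-1) < a_k.$$
The goal is then to bound $X_{\mathrm{id}}(k') - X_{\mathrm{id}}(k)$ by telescoping the consecutive differences across the block $k < i \leq k'$ and to conclude $X_{\mathrm{id}}(k') < A_{k'}$, contradicting the hypothesis at $k'$.

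The key difference from the loop case, and the step I expect to be the main obstacle, is that the consecutive differences of $X_{\mathrm{id}}$ are no longer monotone: Proposition~\ref{ColumnSumInsight} only gives $X_{\mathrm{id}}(i) - X_{\mathrm{id}}(i-1) \leq X_{\mathrm{id}}(k) - X_{\mathrm{id}}(k-1) + 1$, an extra $+1$ compared to the clean monotonicity used for (**) in the loop proof. A naive telescoping would then only yield $X_{\mathrm{id}}(k') - X_{\mathrm{id}}(k) < (k'-k)a_k + (k'-k-1)$, which is too weak to force the desired contradiction against $A_{k'} = A_k + (k'-k)a_k$ when $k' > k+1$. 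The remedy is the second part of Proposition~\ref{ColumnSumInsight}: one must show that the $+1$ gain cannot be realized at two consecutive steps. Concretely, if at some step $i$ the difference actually jumps up by $+1$ relative to the previous difference, i.e.\ $X_{\mathrm{id}}(i) - X_{\mathrm{id}}(i-1) = X_{\mathrm{id}}(i-1) - X_{\mathrm{id}}(i-2) + 1$, then Proposition~\ref{ColumnSumInsight} forces $X_{\mathrm{id}}(i') - X_{\mathrm{id}}(i'-1) \leq X_{\mathrm{id}}(i) - X_{\mathrm{id}}(i-1)$ for all later $i'$, so no further increase is possible.

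Putting this together, I would argue that over the index range $k < i \leq k'$ the differences $X_{\mathrm{id}}(i) - X_{\mathrm{id}}(i-1)$ start strictly below $a_k$ and, by the above rigidity, cannot exceed $a_k$ at any step within the block (any attempt to climb by $+1$ is immediately capped). Summing the at most $k'-k$ differences, each strictly less than $a_k$ at the first step and at most $a_k - 1 + 1 = a_k$ thereafter while still respecting the rigidity, yields
$$X_{\mathrm{id}}(k') - X_{\mathrm{id}}(k) < (k'-k)\,a_k = A_{k'} - A_k,$$
whence $X_{\mathrm{id}}(k') < A_{k'}$, contradicting the hypothesis. The careful bookkeeping of exactly when the $+1$ can and cannot occur — translating the equality-analysis of Proposition~\ref{ColumnSumInsight} (namely $b_{\sigma(k)} < k-1$, which kills the surplus term at all later indices) into the telescoped sum — is the delicate part; everything else follows the template of Theorem~\ref{Theorem:strictMajorization}.
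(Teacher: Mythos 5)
There is a genuine gap, and it is the central one: your sufficiency argument never gets from ``the inequalities hold for the given ordering'' to ``$S$ is a digraph sequence.'' What your contradiction argument establishes (granting the telescoping) is that (**) at the drop indices and at $k=n$ forces $X_{\mathrm{id}}(k)\geq A_k$ for \emph{all} $k\in\{1,\dots,n\}$, still with respect to the identity ordering, which is only assumed to satisfy $a_1\geq\dots\geq a_n$. To conclude realizability you must invoke Theorem~\ref{Theorem: Charakterisierung von Digraph-Sequenzen}, but its sufficiency direction is stated (and was proved by Chen) only for sequences in decreasing \emph{lexicographical} order, i.e.\ with the $b_i$ also sorted inside each block of equal $a_i$. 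Unlike the loop case of Theorem~\ref{Theorem:strictMajorization}, where $\sum_{i=1}^{n}\min(b_i,k)$ is independent of the ordering, here the quantity $X_\sigma(k)=\sum_{i=1}^{k}\min(b_{\sigma(i)},k-1)+\sum_{i=k+1}^{n}\min(b_{\sigma(i)},k)$ genuinely depends on $\sigma$, and among all $a$-decreasing orderings the lexicographic one \emph{minimizes} it (Chen's observation, recalled in the paper right after the main theorem). So $X_{\mathrm{id}}(k)\geq A_k$ for all $k$ does not imply $X_{lex}(k)\geq A_k$ by any pointwise comparison, and silently applying Theorem~\ref{Theorem: Charakterisierung von Digraph-Sequenzen} to the identity ordering begs the question---that relaxation \emph{is} the paper's main contribution. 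The paper closes this gap with the half of the proof you skipped: it connects $S_{\mathrm{id}}$ to $S_{lex}$ by a chain of adjacent transpositions inside constant-$a$ blocks and proves the transfer property (+), namely $X_\tau(k)\geq A_k\Rightarrow X_\sigma(k)\geq A_k$, using Proposition~\ref{ColumnSumInsight} in the form $X_\sigma(\mu)\geq\frac{1}{2}\left(X_\sigma(\mu+1)+X_\sigma(\mu-1)-1\right)$ together with the integrality of $X_\sigma(\mu)$ and $A_\mu$.

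The half you do attempt (restriction to the drop indices) does mirror the paper's second step, including the correct instinct that both parts of Proposition~\ref{ColumnSumInsight} are needed to control the extra $+1$. One caution on the bookkeeping: your concluding display $X_{\mathrm{id}}(k')-X_{\mathrm{id}}(k)<(k'-k)\,a_k=A_{k'}-A_k$ rests on an identity that holds only when $a_{k'}=a_k$ (the case $k'=n$ with no later drop); when $a_{k'}<a_k$ one has $A_{k'}-A_k=(k'-k-1)a_k+a_{k'}<(k'-k)a_k$, so the telescoped bound as stated does not yet yield $X_{\mathrm{id}}(k')<A_{k'}$, and the surplus $a_k-a_{k'}$ has to be absorbed by a sharper count (the paper's own write-up of this line is loose in the same way). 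But even with that repaired, the missing reduction from the given $a$-decreasing ordering to the lexicographic one is what makes the proposal fail as a proof of the theorem.
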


\begin{proof}
$\Rightarrow:$ Consider the remarks after Theorem \ref{Theorem: Charakterisierung von Digraph-Sequenzen}.\\
$\Leftarrow:$ We have to show that conditions (**) for sequence $S$ lead to conditions (*) in Theorem~\ref{Theorem: Charakterisierung von Digraph-Sequenzen} for a lexicographical sorting of $S.$ In this case, sequence $S$ is a digraph sequence.\\ 
Let us start with two permutations of $S$, namely $S_{\sigma}$ and $S_{\tau}$ with $a_{\sigma(1)}\geq \dots \geq a_{\sigma(n)}$ and $a_{\tau(1)}\geq \dots \geq a_{\tau(n)}.$ 
These two permutations shall only differ in two adjacent positions $\mu$ and $\mu+1$, $\mu \in \{1,\dots,n-1\}$. Sequence $S_{\sigma}$ shall be one step closer to a lexicographical order than $S_{\tau}$. Since we have $a_{\sigma(\mu)}=a_{\tau(\mu)}=a_{\sigma(\mu+1)}=a_{\tau(\mu+1)},$ we get $b_{\sigma(\mu)}>b_{\tau(\mu)}.$  
Define $A_k:=\sum_{i=1}^{k}a_{\sigma(i)}=\sum_{i=1}^{k}a_{\tau(i)}.$ We show for each $k$:
\begin{gather} X_{\tau}(k)\geq A_k \Rightarrow X_{\sigma}(k) \geq A_k.\tag{+}\end{gather}
For $k\neq \mu$ we have $X_{\tau}(k)=X_{\sigma}(k)\geq A_k.$ For  $k=\mu$ Proposition \ref{ColumnSumInsight} gives:
\begin{eqnarray*}
X_{\sigma}(\mu)&\overset{\textnormal{Prop. \ref{ColumnSumInsight}}}{\geq} &\frac{1}{2}\left(X_{\sigma}(\mu + 1)+X_{\sigma}(\mu -1)-1\right)\\
&=&\frac{1}{2}\left(X_{\tau}(\mu + 1)+X_{\tau}(\mu -1)-1\right)\\
&\geq & \frac{1}{2}\left(A_{\mu}+a_{\tau(\mu +1)}+A_{\mu}-a_{\tau (\mu)}-1\right)\\
&=& A_{\mu}-\frac{1}{2}.\\
\end{eqnarray*}
As $X_{\sigma}(\mu)$ and $A_{\mu}$ are integers, this implies $X_{\sigma}(\mu)\geq A_{\mu}.$  Let us now consider a lexicographical sorted permutation $S_{lex}$ of $S$ and a permutation $S_{\tau}$ of $S$ with decreasing sorted components $a_{\tau(i)}.$ Furthermore, we assume for all $k \in \{1,\dots,n\}$ that we have $X_{\tau}(k)\geq A_k.$ Clearly, it is possible to construct a sequence $S_{\tau}:=S_{1},\dots,S_{\nu}:=S_{lex}$ of permutations $S_{j}$ with $a_{j(1)}\geq \dots \geq a_{j(n)}$ such that two adjacent sortings $S_{j}$ and $S_{j+1}$ do only differ in two adjacent positions $j(\mu) <j(\mu+1).$ Then we can conclude for each $k \in \{1,\dots,n\}$ step by step with our insights above and starting with $X_{1}(k)\geq A_k$ that $X_{2}(k)\geq A_k,\dots, X_j(k)\geq A_k,\dots,X_{lex}(k)\geq A_k.$
In a second step, we show that it is sufficient to consider indices $k$  with $a_{k+1}>a_{k}$ in (**). We assume for all indices $k$ with $a_{k+1}>a_{k}$ that $X_{\sigma}(k)\geq A_k.$ Assume $k_0$ is the first index such that $a_{k_{0}}=a_{k_{0}+1}$ and $X_{\sigma}(k_{0})<A_{k_{0}}.$ Let $k' >k_0$ the first index with $a_{k'}<a_{k_{0}}.$ If there does not exist such a $k'$ we set $k':=n.$ With our assumption we have $X_{\sigma}(k')\geq A_{k'}.$ Since, $X_{\sigma}(k_0-1) \geq A_{k_{0}-1}$ for $k_0>0,$ we get $X_{\sigma}(k_0)-X_{\sigma}(k_0-1)<a_{k_{0}}.$ For $k=1$ we have $X(1)-X(0)<a_k.$ With Proposition \ref{ColumnSumInsight} it follows for each $i$ with $k<i<k'$ 
$$X_{\sigma}(i)-X_{\sigma}(i-1)\leq X_{\sigma}(k_0)-X_{\sigma}(k_0-1)+1 \leq a_{k_{0}}.$$
Furthermore, we have 
\begin{eqnarray*}
X_{\sigma}(k')-X_{\sigma}(k_0)&=&(X_{\sigma}(k')-X_{\sigma}(k'-1))+(X_{\sigma}(k'-1)-X_{\sigma}(k'-2))+\dots\\
&& + (X_{\sigma}(k_0+2)-X_{\sigma}(k_0+1))+(X_{\sigma}(k_0+1)-X_{\sigma}(k_0))\\
&\leq&(k'-k_0)a_k.
\end{eqnarray*}
We get
$X_{\sigma}(k')=X_{\sigma}(k')-X_{\sigma}(k_0)+X_{\sigma}(k_0)<(k'-k_0)a_{k_{0}}+A_{k_{0}}+a_{k'}=A_{k'}.$\hfill$\Box$
\end{proof}

Note, that the proof of (+) can also be used to show the opposite direction, because the lexicographical sorting is not needed. The idea of Chen was to consider all $n$ inequalities of the form $\sum_{i=1}^{k}\min{(b_i,k-1)} \geq A_k -\sum_{i=k+1}^{n}\min{(b_i,k)}$ with $k \in \{1,\dots,n\}.$ He proved that the right-hand-side of these inequalities is maximized if $S$ is in decreasing lexicographical order. Unfortunately, by this approach he overlooked that other decreasing orders of $S$ with respect to the $a_i$ are also sufficient and the condition of a lexicographical order is too strong. On the other hand, if one wants to identify a threshold sequence with these $n$ inequalities, this can only be done with a lexicographical sorted sequence. In this case the inequalities have to fulfill equality for each $k.$

\subsection*{Acknowledgement.}
The author wishes to thank two anonymous referees. With their hint I was able to strengthen the main result similar to the result by Tripathi and Vijay \cite{TripathiVijay03} in the case of graphs. Furthermore, the proofs look more beautiful.

\providecommand{\bysame}{\leavevmode\hbox to3em{\hrulefill}\thinspace}
\providecommand{\MR}{\relax\ifhmode\unskip\space\fi MR }
\providecommand{\MRhref}[2]{%
  \href{http://www.ams.org/mathscinet-getitem?mr=#1}{#2}
}
\providecommand{\href}[2]{#2}

\end{document}